\newcommand{\C}{{\mathbb C}}
\newcommand{\Hh}{\mathbb{H}}
\newcommand{\p}{\mathfrak{p}}
\newcommand{\End}{\operatorname{End}}
\newcommand{\Ind}{\operatorname{Ind}}
\newcommand{\inv}{\operatorname{inv}}
\newcommand{\Br}{\operatorname{Br}}
\newcommand{\JL}{\mathrm{JL}}
\newcommand{\R}{{\mathbb R}}
\newcommand{\Z}{{\mathbb Z}}
\newcommand{\F}{{\mathbb F}}
\newcommand{\cL}{{\mathcal L}}
\newcommand{\Q}{{\mathbb Q}}
\newcommand{\cO}{{\mathfrak o}}
\newcommand{\cusp}{\mathrm{cusp}}
\newcommand{\SL}{{\mathrm{SL}}}
\newcommand{\GL}{{\mathrm{GL}}}
\newcommand{\PGL}{{\mathrm{PGL}}}
\newcommand{\Aut}{{\mathrm{Aut}}}
\newcommand{\Cent}{{\mathrm{Z}}}
\DeclareMathOperator{\Gal}{Gal}
\numberwithin{equation}{subsection}
\newtheorem{thm}[equation]{Theorem}
\newtheorem{lemma}[equation]{Lemma}
\newtheorem{conjecture}[equation]{Conjecture}
\newtheorem{cor}[equation]{Corollary}
\numberwithin{equation}{subsection}
\newtheorem{prop}[equation]{Proposition}
\newtheorem{hypo}[equation]{Hypothesis}
\theoremstyle{definition}
\numberwithin{equation}{subsection}
\newtheorem{remark}[equation]{Remark}
\newtheorem{defn}[equation]{Definition}
\newtheorem{prop-def}{Proposition-Definition}
\def\@settitle{\begin{center}
    \normalfont
\uppercasenonmath\@title
  \@title
  \end{center}
}
\title[Rationality of Jacquet-Langlands]{Rationality of the local Jacquet-Langlands Corresponence for $\GL_n$}
\author{Kenta Suzuki}
\address{M.I.T., 77 Massachusetts Avenue,
Cambridge, MA, USA}
\email{kjsuzuki@mit.edu}
\begin{document}

\maketitle

\begin{abstract}
We relate the field of definition of representations $\sigma$ of the group of units $D^\times$ of a non-archimedean division algebra $D/F$ to that of its L-parameter $\varphi_\sigma\colon W_F\to \GL_n(\C)$, extending results of \cite{Prasad-Ramakrishnan}. The field of definitions are controlled by division algebras $\mathcal D_{\sigma}$ and $\mathcal D_{\varphi_\sigma}$ over the field of rationality $\Q(\pi)$, and we completely pin down the relationship between the Hasse invariants at places not over $p$. Under some additional assumptions we can also specify the Hasse invariants at places over $p$.
\end{abstract}

\section{Introduction}

Let $F/\Q_p$ be a non-archimedean local field and let $D/F$ be a central division algebra of dimension $n^2$. The local Langlands correspondence and the Jacquet-Langlands correspondence provide a bijection between:
\begin{itemize}
\item irreducible square-integrable representations of $\GL_n(F)$ with central character of finite order;
\item irreducible smooth representations of $D^\times$ with central character of finite order; and
\item semisimple $n$-dimensional representations of the absolute Galois group $\Gamma_F$.
\end{itemize}
For each representation $\pi$ of $\GL_n(F)$, we let $\JL(\pi)$ denote the corresponding representation of $D^\times$ and let $\cL_F(\pi):=\varphi_\pi(-\frac{n-1}2)$ denote the corresponding \emph{twisted} L-parameter. Prasad and Ramakrishnan \cite{Prasad-Ramakrishnan} then relates whether or not $\JL(\pi)$ descends to a real representation to whether $\cL_F(\pi)$ descends to a real representation.

We extend their result, and given a number field $K\subset\C$, we relate whether or not $\JL(\pi)$ descends to a $K$-representation to whether or not $\cL_F(\pi)$ descends to a $K$-representation. Clearly if representations descend to $K$ then $K\supseteq\Q(\pi)$, the \emph{field of rationality} of $\pi$ (see Definition~\ref{defn:field-of-rationality}).

Such rationality questions are controlled by central division algebras $\mathcal D_{\JL(\pi)}$ and $\mathcal D_{\cL_F(\pi)}$ over $\Q(\pi)$, which are common generalizations of the Frobenius-Schur indicator dealt with in \cite{Prasad-Ramakrishnan}, and the Schur index (see the discussion after Definition~\ref{defn:division-algebra}). We completely pin down the relationship between the two division algebras in places away from $p$:
\begin{thm}
Let $\pi$ be an irreducible supercuspidal representation of $\GL_n(F)$, such that $\omega_\pi$ is of finite order. Then, for a place $v\nmid p$ of $\Q(\pi)$,
\[
\inv_v(\mathcal D_{\JL(\pi)})+\inv_v(\mathcal D_{\cL_F(\pi)})=\begin{cases}
0&v\nmid p,\infty\\
\frac1{[\C:\R(\pi)]}&v|\infty.
\end{cases}
\]
\end{thm}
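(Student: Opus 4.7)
The plan is to verify the identity place by place. For each place $v\nmid p$ of $\Q(\pi)$, I would compute the local invariants $\inv_v(\mathcal D_{\JL(\pi)})$ and $\inv_v(\mathcal D_{\cL_F(\pi)})$ separately and check their sum. Recall that $\inv_v(\mathcal D_\sigma)$ encodes the Schur index of $\sigma$ over the completion $\Q(\pi)_v$, so it vanishes exactly when $\sigma$ descends to $\Q(\pi)_v$, and at a real place it is $0$ or $\tfrac12$ according to the Frobenius--Schur indicator.

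At a complex archimedean $v$ one has $\Br(\Q(\pi)_v)=0$, so both invariants vanish and the sum is $0=1/[\C:\R(\pi)]$. At a real archimedean $v$, the embedding $\Q(\pi)\hookrightarrow\R$ forces $\pi$---and therefore both $\JL(\pi)$ and $\cL_F(\pi)$---to be self-dual, so each invariant equals $0$ or $\tfrac12$ depending on whether the representation is orthogonal or symplectic. Here I would invoke the main result of Prasad--Ramakrishnan, which, after accounting for the central twist by $-\tfrac{n-1}{2}$ built into $\cL_F$, asserts that $\JL(\pi)$ and $\cL_F(\pi)$ have opposite Frobenius--Schur indicators, giving a sum of $\tfrac12$ as required.

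For a finite place $v\nmid p$ with residue characteristic $\ell\neq p$, the goal is to prove that both Brauer classes are trivial at $v$, i.e., that both representations descend to $\Q(\pi)_v$. On the Galois side, the wild inertia subgroup acts on $\varphi_\pi$ through a $p$-group, so all $\ell$-primary obstructions to descent factor through a tame finite quotient, whose Schur indices one can control using Brauer-type theorems (\`a la Fein/Yamada) on representations of finite groups of order coprime to the residue characteristic of $v$. On the $D^\times$ side, $\JL(\pi)=\cInd_J^{D^\times}\lambda$ for a compact-mod-center open subgroup $J$ whose pro-$p$ radical acts through a $p$-group, and a parallel reduction yields the same conclusion.

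The main obstacle is making this last step uniform and matching the two constructions. One needs the Bushnell--Henniart parametrizations of $\JL(\pi)$ and of $\varphi_\pi$ to be compared closely enough that the Schur-index computations at $v$ on both sides manifestly yield the same trivial class; the cleanest route is probably to track the inducing data (a simple type on the one hand, an admissible pair on the other) through the Langlands--Jacquet--Langlands bijection so that the $v$-adic descent arguments on the two sides become visibly parallel rather than coincidental.
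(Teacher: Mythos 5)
Your treatment of the archimedean places matches the paper's (both reduce to Prasad--Ramakrishnan's computation of Frobenius--Schur indicators after the half-integral twist), but at the finite places $v\nmid p,\infty$ your strategy diverges and has a real gap. You aim to prove the \emph{stronger} statement that each of $\inv_v(\mathcal D_{\JL(\pi)})$ and $\inv_v(\mathcal D_{\cL_F(\pi)})$ vanishes individually, by reducing to Schur-index theorems for finite groups ``of order coprime to the residue characteristic of $v$.'' That reduction fails: the finite quotients through which $\JL(\pi)$ and $\varphi_\pi$ factor have order divisible by primes $\ell\neq p$ (for instance any $\ell\mid q^n-1$, coming from $\F_{q^n}^\times$ inside $\mathcal{O}_D^\times$, resp.\ from the tame quotient of $W_F$), so the ``$\ell\nmid|G|$'' triviality results do not apply, and the cases that remain are exactly the ones you defer to an unexecuted comparison of Bushnell--Henniart inducing data. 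You also ignore the unramified twist built into $\cL_F(\pi)=\varphi_\pi(-\tfrac{n-1}2)$: for $n$ even it introduces $\sqrt q$ into $\Q\{\cL_F(\pi)\}$ and genuinely changes the local invariants (this is the same phenomenon that makes the archimedean indicator computation delicate), so an argument working only with the finite image of $\varphi_\pi$ computes the wrong algebra.

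The paper sidesteps all of this by only ever controlling the \emph{sum} of the two invariants: by Mieda's result, $\pi^\vee\boxtimes\JL(\pi)\boxtimes\cL_F(\pi)$ occurs with multiplicity one in the $\ell$-adic cohomology of the Lubin--Tate tower, which is a $\Q_\ell$-rational representation; Lemma~\ref{mult-one-lemma} then shows this external tensor product is defined over $\Q_\ell(\pi)$, Lemma~\ref{gl-n-rationality} peels off the $\pi^\vee$ factor, and multiplicativity of $\Q\{-\}$ under $\boxtimes$ yields $\inv_v(\mathcal D_{\JL(\pi)})+\inv_v(\mathcal D_{\cL_F(\pi)})=0$ for every $v\mid\ell$. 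To salvage your approach you would need either to import such a global multiplicity-one input, or to actually prove individual vanishing on both sides, which does not follow from the finite-group results you cite and is not established in the paper.
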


As a consequence, we partially resolve \cite[Conjecture~8.1]{Prasad-Ramakrishnan}. For example, we prove:
\begin{thm}
Let $\pi$ be an irreducible self-dual supercuspidal representation of $\GL_n(F)$. Then for any place $v$ of $\Q(\pi)$,
\[
\inv_v(\mathcal D_{\JL(\pi)}),\inv_v(\mathcal D_{\cL_F(\pi)})\in\frac12\Z/\Z.
\]
In particular, when $\Q(\pi)$ has an odd number of places above $p$ (in particular when $[\Q(\pi):\Q]$ is odd), for any $v|p$,
\begin{equation}
\inv_v(\mathcal D_{\JL(\pi)})+\inv_v(\mathcal D_{\cL_F(\pi)})=\frac12[\Q(\pi):\Q].
\end{equation}
\end{thm}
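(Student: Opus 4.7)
The plan is to combine a self-duality constraint on each Brauer class with Hasse--Brauer--Noether reciprocity and the preceding theorem. Since $\pi\cong\pi^\vee$, both $\JL(\pi)$ and $\cL_F(\pi)$ are self-dual, so each carries an invariant bilinear form already defined over $\Q(\pi)$; this form induces an involution of the first kind on the associated division algebras $\mathcal D_{\JL(\pi)}$ and $\mathcal D_{\cL_F(\pi)}$. By Albert's theorem, a central simple algebra over a field of characteristic zero admits an involution of the first kind if and only if its Brauer class is $2$-torsion. This already yields the first assertion $\inv_v(\mathcal D_{\JL(\pi)}),\inv_v(\mathcal D_{\cL_F(\pi)})\in\tfrac12\Z/\Z$ at every place $v$ of $\Q(\pi)$.

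For the second assertion, self-duality of $\pi$ forces $\Q(\pi)\subset\R$ (complex conjugation fixes the trace character), so $\Q(\pi)$ is totally real with $[\Q(\pi):\Q]$ real infinite places. At each such place the previous theorem gives $\inv_v(\mathcal D_{\JL(\pi)})+\inv_v(\mathcal D_{\cL_F(\pi)})=1/[\C:\R]=\tfrac12$, and at finite $v\nmid p$ the sum is $0$. Adding the two global reciprocity identities $\sum_v\inv_v(\mathcal D_{\JL(\pi)})=0$ and $\sum_v\inv_v(\mathcal D_{\cL_F(\pi)})=0$ in $\Q/\Z$, and rearranging, then yields
\[
\sum_{v\mid p}\bigl[\inv_v(\mathcal D_{\JL(\pi)})+\inv_v(\mathcal D_{\cL_F(\pi)})\bigr]\equiv\tfrac12[\Q(\pi):\Q]\pmod{\Z}.
\]

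The remaining step is to upgrade this sum statement to a per-place identity. By the first assertion each summand lies in $\{0,\tfrac12\}$; if I can establish that all $k$ summands at places over $p$ take a common value $c\in\{0,\tfrac12\}$, then with $k$ odd the relation $kc\equiv\tfrac12[\Q(\pi):\Q]\pmod{\Z}$ forces $c\equiv\tfrac12[\Q(\pi):\Q]\pmod{\Z}$, in either parity of $[\Q(\pi):\Q]$. The main obstacle is therefore the uniformity of these sums across $v\mid p$. My plan is to control it via a global invariant of the self-dual representation --- for instance a Frobenius--Schur / orthogonal--symplectic sign attached to $\JL(\pi)$ and $\cL_F(\pi)$, which does not depend on the choice of $v\mid p$ --- perhaps by showing that the local Brauer class $[\mathcal D_{\JL(\pi)}\otimes_{\Q(\pi)}\mathcal D_{\cL_F(\pi)}]_v$ at $v\mid p$ is determined by such a sign, or by extracting the uniformity directly from the explicit constructions of $\mathcal D_{\JL(\pi)}$ and $\mathcal D_{\cL_F(\pi)}$ introduced earlier in the paper.
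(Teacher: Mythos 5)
Your first assertion is correct, and you reach it by a genuinely different route than the paper: you invoke Albert's theorem on involutions of the first kind (the classical fact that the Schur index of a real-valued character divides $2$), whereas the paper deduces $2$-torsion from the Benard--Schacher theorem (Lemma~\ref{schur-subgroup-lemma}): a Schur algebra over $K$ has local index equal to the order of a root of unity contained in $K$, and $\Q(\pi)$, being totally real, contains only $\zeta_2$. One caveat on your phrasing: the invariant bilinear form lives on the complex space $V$, which need not descend to a $\Q(\pi)$-model of $\pi$; the clean way to produce the involution of the first kind on $\mathcal D_{\JL(\pi)}$ is via the canonical anti-automorphism $g\mapsto g^{-1}$ of the group algebra, which preserves the simple component of a self-dual $\pi$ and acts trivially on its center because the character is real-valued.

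The second assertion, however, has a genuine gap that you yourself flag: you never establish that $\inv_v(\mathcal D_{\JL(\pi)})+\inv_v(\mathcal D_{\cL_F(\pi)})$ takes the same value at all $v\mid p$, and without that the reciprocity computation only controls the sum over $v\mid p$, not each term. The sketched fallback (a Frobenius--Schur-type sign determining the class at $v\mid p$) does not work as stated: the orthogonal/symplectic sign governs the invariant at the \emph{archimedean} places, not at finite places. The missing ingredient is again Benard--Schacher (Lemma~\ref{schur-subgroup-lemma}): for a simple component of $K[G]$ with $K/\Q$ abelian (which $\Q(\pi)$ is, being contained in a cyclotomic field under Hypothesis~\ref{finite-order-hypo}), one has $\inv_{\p}=b\,\inv_{\sigma(\p)}$ where $\sigma(\zeta_m)=\zeta_m^b$ and $m$ is the common order of the local invariants over $p$. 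Since $\Q(\pi)$ is totally real, $m\le 2$, so $b=1$ and the invariant is constant on the Galois orbit of primes over $p$, i.e.\ on all of them. With that uniformity in hand your parity argument ($kc\equiv\tfrac12[\Q(\pi):\Q]$ with $k$ odd and $c\in\tfrac12\Z/\Z$) closes the proof exactly as in the paper.
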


\section{Acknowledgement}
The author thanks Guy Henniart and Yoichi Mieda for helpful discussions.

\section{Preliminaries on rationality}

\begin{defn}\label{defn:field-of-rationality}
Let $G$ be a locally pro-finite group and let $(\pi,V)$ be a complex representation of $G$. The \emph{field of rationality} of $\pi$ is $\Q(\pi):=\{z\in\C:\gamma\cdot z=z,\forall \gamma\in\Gamma\}$, where $\Gamma(\pi)\subseteq\Aut(\C/\Q)$ is the group of $\gamma\in\Aut(\C/\Q)$ such that $\pi^\gamma\cong\pi$. Moreover, for each field $K\subset\C$, let $K(\pi)=K.\Q(\pi)$.
\end{defn}

\begin{remark}\label{galois-remark}
When $\pi$ is an irreducible representation of a finite group $G$, the field of rationality $\Q(\pi)$ is always Galois. Indeed, it is a sub-field of $\Q(\zeta_{|G|})$. However, in general this need not be the case: consider $\Z\to\C^\times:1\mapsto\sqrt[3]2$.
\end{remark}

\begin{defn}
Let $G$ be a group and let $(\pi,V)$ be a complex representation of $G$. The representation $\pi$ is \emph{defined over a sub-field $K\subset\C$} if there exists a $K$-vector space representation $(\pi_0,V_0)$ such that $V=V_0\otimes_K\C$. 
\end{defn}

\begin{remark}
If $(\pi,V)$ is defined over a field $K$, then $K$ contains the field of rationality $\Q(\pi)$, since certainly $\Aut(\C/K)\subseteq\Gamma(\pi)$. However, in general $(\pi,V)$ need not be defined over $\Q(\pi)$. Indeed, the quaternion group $Q_8$ has a complex $2$-dimensional representation whose character is rational, but the representation \emph{is not} defined over $\Q$.
\end{remark}

\begin{remark}\label{non-unique-remark}
There need not be a minimal field $K$ such that a representation $\pi$ is defined over $K$. The $2$-dimensional representation of $Q_8$ is not defined over $\Q$, but can be defined over any quadratic field $K/\Q$ such that $a^2+b^2+1=0$ has a solution:
\[
i\mapsto \begin{pmatrix}a&b\\b&-a\end{pmatrix},j\mapsto \begin{pmatrix}b&-a\\-a&-b\end{pmatrix},k\mapsto \begin{pmatrix}0&1\\-1&0\end{pmatrix}.
\]
\end{remark}

To each representation, we can attach an auxilary $\Q$-algebra, often convenient in addresssing rationality questions:
\begin{defn}\label{defn:division-algebra}
For a (possibly reducible) representation $(\pi,V)$ let $\Q\{\pi\}\subset\End_\C(V)$ be the $\Q$-span of $\{\pi(g):g\in G\}$. It inherits the natural ring structure from $\End_\C(V)$.
\end{defn}
For convenience, let us assume $\Q\{\pi\}$ is finite-dimensional over $\Q$, which implies $\Q\{\pi\}$ is simple, i.e., of the form $M_n(\mathcal D_\pi)$ for some division algebra $\mathcal D_\pi$. Then:

\begin{lemma}
Let $(\pi,V)$ be an irreducible representation of a locally pro-finite group $G$, such that $\Q\{\pi\}$ is finite-dimensional over $\Q$. Then $\Q(\pi)=\Cent(\Q\{\pi\})=\Cent(\mathcal D_\pi)$. Moreover, $\pi$ is defined over a field $K$ if and only if $K\otimes_{\Q(\pi)}\mathcal D_\pi$ (equivalently, $K\otimes_{\Q(\pi)}\Q\{\pi\}$) is split, i.e., isomorphic to $M_k(K)$ for some integer $k$.
\end{lemma}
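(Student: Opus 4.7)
My plan is to view $\Q\{\pi\}$ as a central simple algebra over its center $Z := \Cent(\Q\{\pi\})$ and deduce both assertions from standard properties of central simple algebras, with both the field of rationality and the descent question controlled by $Z$.

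\textbf{Structure of $\Q\{\pi\}$.} First I would note that, since $\pi$ is irreducible and $\Q\{\pi\}$ is finite-dimensional over $\Q$, its $\C$-span $\C\{\pi\} \subset \End_\C(V)$ is also finite-dimensional; by Jacobson density this equals $\End_\C(V) \cong M_n(\C)$, so $V$ is finite-dimensional. Thus $\Q\{\pi\}$ sits as a simple $\Q$-subalgebra of $M_n(\C)$ whose $\C$-span is all of $M_n(\C)$, so elements centralizing $\Q\{\pi\}$ are the scalars, giving $Z = \Q\{\pi\} \cap (\C \cdot \Id)$. Under $\Q\{\pi\} = M_m(\mathcal D_\pi)$ this same $Z$ is $\Cent(\mathcal D_\pi)$, which handles the first equality.

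\textbf{Identification $Z = \Q(\pi)$.} Next I would identify $Z$ with the $\Q$-span of character values via the reduced trace $\tr^{\mathrm{red}}\colon \Q\{\pi\} \to Z$. Because $\Q\{\pi\}\otimes_Z \C \cong M_n(\C) = \End_\C(V)$ (dimension count plus $Z$-central simplicity, with the tensor taken along the canonical inclusion $Z\hookrightarrow\C$), $\tr^{\mathrm{red}}$ agrees with the matrix trace on $\Q\{\pi\}\subset M_n(\C)$, so $\chi_\pi(g) = \tr^{\mathrm{red}}(\pi(g)) \in Z$. Surjectivity of reduced trace produces some $x = \sum c_i\pi(g_i) \in \Q\{\pi\}$ with $\tr^{\mathrm{red}}(x) = 1$; for any $z \in Z$, expanding $zx \in \Q\{\pi\}$ in the $\Q$-spanning set $\{\pi(g)\}$ and applying $\tr^{\mathrm{red}}$ exhibits $z$ as a $\Q$-linear combination of character values. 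Hence $Z$ is precisely the $\Q$-span of $\{\chi_\pi(g):g\in G\}$; since irreducible representations are determined by characters, $\Gamma(\pi) = \Aut(\C/Z)$. In characteristic zero the fixed field of $\Aut(\C/K)$ is $K$ itself for any subfield $K\subset\C$ (algebraic elements outside $K$ are moved by extending a non-trivial Galois conjugation to $\C$; transcendental ones by permuting a transcendence basis), giving $\Q(\pi) = Z$.

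\textbf{Descent.} For the second assertion, let $K \supseteq \Q(\pi)$ and form $A_K := K\cdot\Q\{\pi\} \subset \End_\C(V)$, which is a quotient of $K\otimes_{\Q(\pi)}\Q\{\pi\}$; the source is $K$-central simple with $\dim_K = n^2$ matching $\dim_K A_K$ (computed from $A_K\otimes_K\C = \End_\C V$), so the map is an isomorphism, identifying $A_K$ with $M_m(K\otimes_{\Q(\pi)}\mathcal D_\pi)$. If $\pi$ is defined on a $G$-stable $K$-form $V_0\subset V$, then $A_K \subseteq \End_K(V_0) = M_n(K)$, and a $K$-dimension comparison forces $A_K = M_n(K)$, so $K\otimes_{\Q(\pi)}\mathcal D_\pi$ is split. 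Conversely, if $K\otimes_{\Q(\pi)}\mathcal D_\pi$ is split then $A_K\cong M_n(K)$ has a unique simple $K$-module $K^n$; the resulting $G$-action is a $K$-rational model of $\pi$, since $K^n \otimes_K\C$ is the unique simple $M_n(\C)$-module and hence isomorphic to $V$.

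\textbf{Main obstacle.} The most delicate step is the middle one: matching $\tr^{\mathrm{red}}$ with the matrix trace on $\Q\{\pi\}\subset M_n(\C)$ and thereby extracting $Z$ as the $\Q$-span of character values. Once this is in hand, the identification $\Q(\pi) = Z$ and the splitting criterion for descent reduce to standard facts about fixed fields of automorphism groups and Brauer classes of central simple algebras.
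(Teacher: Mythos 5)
Your proof is correct. The overall strategy coincides with the paper's: view $\Q\{\pi\}$ as a central simple algebra over its center $Z$, and reduce the descent question to the splitting of $Z\otimes$-base changes; your treatment of the second assertion (a $G$-stable $K$-form forces $A_K=\End_K(V_0)$ by a dimension count, and conversely the simple module of a split $A_K$ gives a $K$-model) is essentially the paper's argument spelled out in more detail.

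Where you genuinely diverge is in the identification $\Q(\pi)=Z$. The paper argues both inclusions directly on the algebra: for $\sigma$ with $\pi^\sigma\cong\pi$, the central elements $z$ and $\sigma(z)$ are conjugate scalars, hence equal (giving $\Gamma_\pi\subseteq\Aut(\C/Z)$); and conversely $V$ is the canonical module of $\Q\{\pi\}\otimes_Z\C\cong\End_\C(V)$, which is visibly $\Aut(\C/Z)$-stable. You instead route through character theory: the reduced trace identifies $Z$ with the $\Q$-span of $\{\chi_\pi(g)\}$ (using surjectivity of $\tr^{\mathrm{red}}$ and the choice of $x$ with $\tr^{\mathrm{red}}(x)=1$), and then $\Gamma(\pi)=\Aut(\C/Z)$ follows from the fact that finite-dimensional irreducible representations in characteristic zero are determined by their characters. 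This is the classical Schur-index argument; it costs you two extra standard inputs (character rigidity and the surjectivity/compatibility of the reduced trace under base change) but buys the sharper statement that $\Q(\pi)$ is literally the field generated by character values, which the paper's argument does not make explicit. Both proofs ultimately invoke the same fixed-field fact $\C^{\Aut(\C/Z)}=Z$, which you are right to flag as requiring the characteristic-zero extension argument. Your preliminary observation that finite-dimensionality of $\Q\{\pi\}$ forces $\dim_\C V<\infty$ and $\C\{\pi\}=\End_\C(V)$ via Jacobson density is a point the paper leaves implicit, and is worth making.
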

\begin{proof}
For $\sigma\in\Aut(\C/\Q)$, clearly $\pi\cong\pi^\sigma$ implies that for each $z\in\Cent(\Q\{\pi\})$ the elements $z,\sigma(z)\in\End_\C(V)$ are conjugate to each other, which, since $z$ is central, shows $z=\sigma(z)$. Thus, $\Gamma_\pi\subseteq\Aut(\C/\Q(\pi))$, in the notation of Definition~\ref{defn:field-of-rationality}. Conversely, since $\Q\{\pi\}$ is a central simple algebra over $\Cent(\Q\{\pi\})$, so is $\Q\{\pi\}\otimes_{\Cent(\Q\{\pi\})}\C$. Thus, $\Q\{\pi\}\hookrightarrow\End_\C(V)$ induces an isomorphism $\Q\{\pi\}\otimes_{\Cent(\Q\{\pi\})}\C\cong\End_\C(V)$. Thus, the $\Q\{\pi\}$-representation $V$ is simply the canonical representation, which is in particular invariant under the action of $\Aut(\C/\Cent(\Q\{\pi\}))$. Thus $\Aut(\C/\Cent(\Q\{\pi\}))\subset\Gamma_\pi$.

Now, $\pi$ is defined over $K/\Q(\pi)$ if and only if there is some $K$-vector space $V_0$ such that $V_0\otimes_K\C=V$ and $\Q\{\pi\}$ has image in $\End_K(V_0)$. This is exactly equivalent to $\Q\{\pi\}\otimes_{\Q(\pi)}K$ splitting.
\end{proof}

\begin{remark}
The equality $\Q(\pi)=\Cent(\Q\{\pi\})$ need not hold when $\pi$ is reducible. For example, let $\pi$ be the representation of the cyclic group $C_4$ given by $\begin{pmatrix}&1\\-1\end{pmatrix}\in\GL_2(\Q)$. Then $\Q(\pi)=\Q$, while $\Q\{\pi\}=\Q(\sqrt{-1})$.
\end{remark}

\begin{remark}
The index of $\mathcal D_\pi$, the square root of its $\Q(\pi)$-dimension, is usually called the \emph{Schur index} of $\pi$.
\end{remark}

\begin{remark}
We can now re-phrase the observation in Remark~\ref{non-unique-remark} in the language of division algebras. For the $2$-dimensional representation $\pi$ of $Q_8$ we have $\Q\{\pi\}=\Hh_\Q=(-1,-1)_\Q$, a quaternion algebra over $\Q$. Now, $\pi$ is defined over $K$ if and only if $(-1,-1)_K\cong M_2(K)$.

The algebra $[\Hh_\Q]\in\Br(\Q)$ is such that $\inv_2([\Hh_\Q])=\frac12$ and $\inv_\infty([\Hh_\Q])=\frac12$, and $\inv_p([\Hh_\Q])=0$ for all $p\ne 2,\infty$. Thus, $[\Hh_\Q\otimes_\Q K]=0\in\Br(K)$ if and only if the quadratic extension $K/\Q$ is inert over $2$ and $\infty$, i.e., $K$ is of the form $\Q(\sqrt d)$ where $d<0$ is a square-free integer such that $d\equiv2,3\pmod4$.
\end{remark}

\begin{lemma}\label{mult-one-lemma}
Let $\pi$ be an irreducible $\C$-representation of a locally pro-finite group $G$ which appears with multiplicity one in a $K$-rational representation $\Pi$ for some $K\subset\C$. Then $\pi$ is defined over $K(\pi)$.
\end{lemma}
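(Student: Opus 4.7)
The plan is to perform Galois descent on the $\pi$-isotypic subspace of $\Pi$, producing a $K(\pi)$-rational structure on $\pi$. First I would fix a $K$-form $\Pi_0$ of $\Pi$ and extend scalars to $K(\pi)$, setting $\Pi_1 := \Pi_0 \otimes_K K(\pi)$, so that $\Pi = \Pi_1 \otimes_{K(\pi)} \C$. The group $\Gamma := \Aut(\C/K(\pi))$ acts on $\Pi$ by acting on the second tensor factor; this action is $G$-equivariant (because the $G$-action descends to $\Pi_0$ and hence to $\Pi_1$) and $\sigma$-semilinear, and its fixed subspace is exactly $\Pi_1$.

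Next, let $V \subseteq \Pi$ denote the $\pi$-isotypic subspace; by the multiplicity-one hypothesis, $V$ is a single copy of $\pi$ inside $\Pi$. For any $\sigma \in \Gamma$, the image $\sigma(V)$ is a $G$-stable $\C$-subspace of $\Pi$, isomorphic as a $\C[G]$-module to $\pi^\sigma$. Since $\sigma$ fixes $K(\pi) \supseteq \Q(\pi)$, the preceding lemma yields $\pi^\sigma \cong \pi$, so $\sigma(V)$ is again a copy of $\pi$; by multiplicity one, $\sigma(V) = V$. Thus $V$ is $\Gamma$-stable.

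Applying Galois descent for $\C / K(\pi)$ to this $\Gamma$-stable subspace then gives a $K(\pi)[G]$-submodule $V_0 := V^\Gamma \subseteq \Pi_1$ satisfying $V_0 \otimes_{K(\pi)} \C = V \cong \pi$, which realizes $\pi$ over $K(\pi)$.

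The main delicate point will be the containment $\Aut(\C/\Q(\pi)) \subseteq \Gamma(\pi)$ used in step two: this is the second half of the preceding lemma, and it rests on the identification $\Q(\pi) = \Cent(\Q\{\pi\})$ for the (implicitly finite-dimensional) central simple algebra $\Q\{\pi\}$, so finite-dimensionality of $\Q\{\pi\}$ is used here. The Galois descent in the final step is otherwise standard: a $\Gamma$-stable $\C$-subspace of $\Pi_1\otimes_{K(\pi)}\C$ equals the scalar extension of its $\Gamma$-fixed part.
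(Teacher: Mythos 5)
Your proposal is correct and is essentially the paper's own argument: the paper's (very terse) proof also views the unique copy of $\pi$ inside $\Pi$, notes it is stable under $\Aut(\C/K(\pi))$ by multiplicity one, and concludes by Galois descent. Your write-up just makes explicit the semilinear action, the descent step, and the point that $\Aut(\C/\Q(\pi))\subseteq\Gamma(\pi)$ comes from the preceding lemma (with its finite-dimensionality hypothesis on $\Q\{\pi\}$), all of which the paper leaves implicit.
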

\begin{proof}
View $\pi\subseteq\Pi$ as a subspace. Then for each $\sigma\in\Aut(\C/K(\pi))$, by multiplicity one, we have $\pi=\pi^\sigma$ as subspaces of $\Pi$. Thus, $\pi$ is defined over $K(\pi)$.
\end{proof}

\section{The field of definition for supercuspidal representations of $\GL_n(F)$}

Let $q$ be the order of the residue field $\cO_F/\p_F$ of $F$. For reasons in Remark~\ref{galois-remark}, henceforth we assume:

\begin{hypo}\label{finite-order-hypo}
$\pi$ is an irreducible representation of $\GL_n(F)$ such that one of the following equivalent conditions hold:
\begin{itemize}
\item the central character $\omega_\pi$ is of finite order
\item $\JL(\pi)$ factors through a finite quotient of $D^\times$
\item the L-parameter $\varphi_\pi\colon W_F\times\SL_2(\C)\to\GL_n(\C)$ extends to $\Gamma_F\times\SL_2(\C)$.
\end{itemize}
\end{hypo}

The equivalence follows from:
\begin{lemma}
Let $(\varphi,V)$ be an irreducible representation of $W_F$. Then:
\begin{enumerate}
\item\label{item1} for any $g\in W_F$, there exists an integer $N>0$ such that $\varphi(g)^N$ acts as a scalar on $V$. In other words, $\varphi\colon W_F\to\PGL(V)$ factors through $\Gamma_F$.
\item\label{item2} there exists an unramified character $\chi$ such that $\chi^{-1}\varphi$ extends to a representations of $\Gamma_F$.
\end{enumerate}
\end{lemma}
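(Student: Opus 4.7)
My plan is to reduce to a setup where $W_F$ is replaced by a semidirect product of a finite group with $\Z$, and then exploit a central power of Frobenius. First I would pass to the quotient $\bar W_F := W_F/H$ where $H := \ker(\varphi|_{I_F})$; this $H$ is open in $I_F$ (since $\varphi|_{I_F}$ factors through a finite quotient) and normal in $W_F$ (since $I_F\trianglelefteq W_F$). The quotient sits in a short exact sequence
\[
1\to I\to \bar W_F\to \Z\to 1
\]
with $I := \varphi(I_F)$ finite, and since $\Z$ is free, this sequence splits, yielding $\bar W_F \cong I\rtimes\langle \bar\Phi\rangle$ for a Frobenius lift $\bar\Phi$. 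The key structural fact is that $\Aut(I)$ is finite, so some $\bar\Phi^M$ acts trivially on $I$ by conjugation and hence is central in $\bar W_F$; by Schur's lemma, $\varphi(\bar\Phi^M)\in\C^\times$ is then a scalar.

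For (1), I would write an arbitrary element as $g=a\bar\Phi^n\in\bar W_F$ and expand $g^M$ in the semidirect product, obtaining $g^M = b\bar\Phi^{Mn}$ where $b := \prod_{i=0}^{M-1}\bar\Phi^{in}a\bar\Phi^{-in}\in I$. Since $\bar\Phi^{Mn}=(\bar\Phi^M)^n$ is central, it commutes with $b$, so
\[
g^{M|I|} = b^{|I|}\bar\Phi^{Mn|I|} = \bar\Phi^{Mn|I|}
\]
(using $b^{|I|}=1$), and applying $\varphi$ gives $\varphi(g)^{M|I|} = \varphi(\bar\Phi^M)^{n|I|}$, a scalar. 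So the uniform exponent $N=M|I|$ works for every $g$.

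For (2), with the same $M$, I would choose $\alpha\in\C^\times$ with $\alpha^M=\varphi(\Phi^M)$ (possible since $\C^\times$ is divisible) and let $\chi\colon W_F\to\C^\times$ be the unramified character with $\chi(\Phi)=\alpha$. Then $(\chi^{-1}\varphi)(\Phi)$ has order dividing $M$, while $(\chi^{-1}\varphi)|_{I_F}=\varphi|_{I_F}$ retains the finite image $I$, which is normalized by $(\chi^{-1}\varphi)(\Phi)$ (since twisting by a scalar does not affect the conjugation action on $I$). Hence $(\chi^{-1}\varphi)(W_F)$ is finite; this finite image corresponds to a finite Galois extension $F'/F$ through which $\chi^{-1}\varphi$ factors, so $\chi^{-1}\varphi$ extends to a continuous representation of $\Gamma_F$. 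I do not foresee any real obstacle beyond careful bookkeeping of exponents; the only structural input is the finiteness of $\Aut(I)$, which produces the central Frobenius power $\bar\Phi^M$.
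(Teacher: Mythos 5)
Your proof is correct and uses the same key idea as the paper: the image $\varphi(I_F)$ is finite, so some power of Frobenius centralizes it and is therefore scalar by Schur's lemma. You simply make explicit two steps the paper leaves terse — the reduction of a general element $g=a\bar\Phi^n$ to the Frobenius case (via the uniform exponent $M|I|$) and the deduction of (2) from (1) by twisting with an unramified character — so this is the same argument, carried out in more detail.
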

\begin{proof}
Clearly \eqref{item2} follows from \eqref{item1}. To prove \eqref{item1}, note that it suffices to check it for a lift $\varpi\in W_F$ of the Frobenius of the residue field $k_F$, since the inertia $I_F$ is pro-finite. Now, the action of $I_F$ factors through some finite subgroup $G\subset\GL(V)$, and $\varpi$ acts as an automorphism of $G$, so there exists an integer $N>0$ such that $\varpi^N$ acts trivially on $G$. Then $\varpi^N$ is central in $G\rtimes\langle\varpi\rangle$, so by Schur's lemma $\varphi(\varpi)^N$ is a scalar.
\end{proof}

\begin{lemma}
The local Langlands correspondence and Jacquet-Langlands correspondence, twisted by $\nu^{(n-1)/2}$ are invariant under $\Aut(\C/\Q)$. That is, for any $\sigma\in \Aut(\C/\Q)$:
\[
\cL_F(\pi^\sigma)=\cL_F(\pi)^\sigma,\JL(\pi^\sigma)=\JL(\pi)^\sigma.
\]
In particular, given an irreducible square-integrable representation $\pi$ of $\GL_n(F)$, we have:
\[
\Q(\pi)=\Q(\JL(\pi))=\Q(\cL_F(\pi)).
\]
\end{lemma}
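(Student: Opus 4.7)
The plan is to invoke the standard characterizations of each correspondence and to check that the defining identities transform equivariantly under $\Aut(\C/\Q)$.

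For the LLC part, I would use Henniart's characterization of the correspondence $\pi\mapsto\varphi_\pi$ by compatibility with Rankin--Selberg $L$- and $\varepsilon$-factors of pairs, together with central characters and twisting by characters of $\GL_1$. The shift by $(n-1)/2$ in the definition of $\cL_F(\pi)$ is exactly what is required to make the resulting identities $\Aut(\C/\Q)$-equivariant: applying $\sigma$ coefficient-wise to the Laurent-series identities characterizing $\varphi_\pi$ yields the identities characterizing $\cL_F(\pi^\sigma)$ after substituting $\cL_F(\pi)^\sigma$, and uniqueness then forces $\cL_F(\pi^\sigma)=\cL_F(\pi)^\sigma$.

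For the JL part, I would use the character-identity characterization due to Rogawski and Deligne--Kazhdan--Vign\'eras: for regular elliptic $g\in D^\times$ and a matching $g'\in\GL_n(F)$, the distributional characters satisfy $\Theta_{\JL(\pi)}(g)=(-1)^{n-1}\Theta_\pi(g')$. Since characters transform as $\Theta_{\pi^\sigma}=\sigma\circ\Theta_\pi$ (viewed as complex-valued locally constant functions on regular semisimple loci), applying $\sigma$ to both sides produces the same identity with $\JL(\pi)^\sigma$ and $\pi^\sigma$ in place of $\JL(\pi)$ and $\pi$; uniqueness of $\JL$ then yields $\JL(\pi^\sigma)=\JL(\pi)^\sigma$.

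Granted these two equivariance statements, the equality of rationality fields is formal: $\sigma\in\Gamma(\pi)$ iff $\pi^\sigma\cong\pi$ iff $\JL(\pi)^\sigma\cong\JL(\pi)$ iff $\sigma\in\Gamma(\JL(\pi))$, and likewise for $\cL_F(\pi)$, so $\Gamma(\pi)=\Gamma(\JL(\pi))=\Gamma(\cL_F(\pi))$; taking fixed subfields of $\C$ gives the three claimed equalities. The main technical obstacle, in my view, is the Galois-equivariance of the local constants $\varepsilon(s,\pi\times\pi',\psi)$: the additive character $\psi$ need not be preserved under $\sigma$, and keeping careful track of how the $\psi$-dependence interacts with the $(n-1)/2$ twist (so that the defining identities really are preserved on the nose) is the delicate part of the argument.
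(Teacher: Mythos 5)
Your proposal is correct and follows essentially the same route as the paper: the paper likewise appeals to Henniart's characterization of the LLC via $L$- and $\varepsilon$-factors of pairs and to the Deligne--Kazhdan--Vign\'eras character identity for $\JL$, noting that both are $\Aut(\C/\Q)$-equivariant after the half-integral twist (citing Harris--Taylor, Lem.~VII.1.6.2, where this is carried out for $\GL_n$). Your closing remark correctly isolates the $\psi$-dependence of the $\varepsilon$-factors as the point that the twist by $\nu^{(n-1)/2}$ is designed to handle.
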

\begin{proof}
Indeed, the characterizing properties in terms of character formulae by \cite{henniart-characterization} and \cite{DKV} are invariant (after twisting) under $\Aut(\C/\Q)$. For $\GL_n$, this was observed in \cite[Lem~VII.1.6.2]{harris-taylor}.
\end{proof}

First, we can show that for representations of $\GL_n(F)$, there are no obstructions to defining representations over the field of rationality:

\begin{lemma}\label{gl-n-rationality}
Any irreducible square-integrable representation $\pi$ of $\GL_n(F)$ is defined over $\Q(\pi)$.
\end{lemma}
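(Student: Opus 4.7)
The plan is to combine the Zelevinsky classification of square-integrable representations with the Bushnell–Kutzko classification of supercuspidals, using Lemma~\ref{mult-one-lemma} to reduce the generalized Steinberg case to the pure supercuspidal case.

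First I would carry out the reduction to the supercuspidal case. By the Bernstein–Zelevinsky classification, every irreducible square-integrable representation of $\GL_n(F)$ has the form $\St(r,\sigma)$ for an irreducible supercuspidal $\sigma$ of $\GL_d(F)$ with $n=rd$. For a suitable ordering of the inducing data, $\St(r,\sigma)$ appears as the unique irreducible subrepresentation, hence with multiplicity one, of the unnormalized parabolic induction
\[
\Pi := \Ind_P^{\GL_n(F)}\bigl(\sigma \boxtimes \sigma\nu \boxtimes \cdots \boxtimes \sigma\nu^{r-1}\bigr),
\]
where $\nu=|\det|$. Since $(r,\sigma)\mapsto\St(r,\sigma)$ is equivariant for the $\Aut(\C/\Q)$-action, we obtain $\Q(\pi)=\Q(\sigma)$. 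Granting the supercuspidal case, $\sigma$ is defined over $\Q(\sigma)$; because $\nu$ is $\Q$-valued, this gives a $\Q(\pi)$-rational structure on $\Pi$, and Lemma~\ref{mult-one-lemma} then forces $\pi\subset\Pi$ to be defined over $\Q(\pi)$.

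For the supercuspidal case I would appeal to the Bushnell–Kutzko exhaustion theorem to write $\sigma \cong \cInd_{\widetilde J}^{\GL_n(F)} \widetilde\Lambda$ for a maximal extended simple type $(\widetilde J,\widetilde\Lambda)$, with $\widetilde J$ open and compact modulo the center and $\widetilde\Lambda$ finite-dimensional and irreducible. Compact induction commutes with extension of scalars; combined with the uniqueness of $(\widetilde J,\widetilde\Lambda)$ up to $\GL_n(F)$-conjugacy — which gives $\Q(\widetilde\Lambda)=\Q(\sigma)$ — it suffices to show that $\widetilde\Lambda$ is defined over $\Q(\widetilde\Lambda)$.

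The hardest step is this last rationality statement, which becomes a finite-group problem after one fixes the central character. Using the standard decomposition $\widetilde\Lambda \cong \widetilde\kappa \otimes \widetilde\tau$, where $\widetilde\tau$ is inflated from an irreducible cuspidal representation of the finite reductive quotient $J/J^1 \cong \GL_m(k_E)$ and $\widetilde\kappa$ is a $\beta$-extension of the Heisenberg representation of $J^1$, one treats each factor separately: cuspidal representations of $\GL_m$ over a finite field have Schur index $1$ by Green's theory of Deligne–Lusztig characters, while the $\beta$-extension $\widetilde\kappa$ descends by a direct cocycle computation on the corresponding Heisenberg group, whose $2$-cocycle class is cohomologically trivial over $\Q(\widetilde\kappa)$. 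Alternatively, one may directly invoke the result of Henniart that every irreducible supercuspidal of $\GL_n(F)$ has trivial Schur index, which is equivalent to the present lemma.
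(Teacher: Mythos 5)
Your overall architecture (classification of square-integrables, then types for the supercuspidal case) is a genuinely different route from the paper, which proves the lemma in one stroke: a square-integrable $\pi$ is generic, hence appears with multiplicity one in $\Ind_N^G(\psi)$, and the Gelfand--Graev-type representation $\Ind_N^G(\psi)$ is defined over $\Q$ by the argument of Gow (conjugation by the diagonal torus intertwines $\Ind_N^G(\psi)$ with $\Ind_N^G(\psi^\gamma)$), so Lemma~\ref{mult-one-lemma} applies directly. That proof needs no classification results at all and treats supercuspidals and generalized Steinbergs uniformly. Your reduction to the supercuspidal case is workable in principle, but note two wrinkles: unnormalized induction from $\sigma\boxtimes\cdots\boxtimes\sigma\nu^{r-1}$ has as its unique irreducible subrepresentation a twist of $\St(r,\sigma)$ by a power of $\delta_P^{1/2}$, which can introduce $\sqrt q$; you should instead induce the actual supercuspidal support of $\pi$ in a suitable order, and then argue that $\Aut(\C/\Q(\pi))$ fixes each member of the support (not just the multiset) before claiming the induced representation carries a $\Q(\pi)$-structure.

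The genuine gap is in the supercuspidal step. The assertion that the $\beta$-extension $\widetilde\kappa$ ``descends by a direct cocycle computation\dots whose $2$-cocycle class is cohomologically trivial over $\Q(\widetilde\kappa)$'' is precisely the hard point, and you give no argument for it: rationality of Heisenberg/Weil-type representations is exactly where nontrivial Schur indices arise for other groups, so triviality of this class for $\GL_n$ cannot be waved through. Moreover, even granting that each of $\widetilde\kappa$ and $\widetilde\tau$ has a model over some field, the factorization $\widetilde\Lambda\cong\widetilde\kappa\otimes\widetilde\tau$ is only canonical up to twist, and the individual factors need not be defined over $\Q(\widetilde\Lambda)$ --- the obstructions could live in the factors separately and only cancel in the product --- so ``treat each factor separately'' does not suffice without controlling this. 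You also use $\Q(\widetilde\Lambda)=\Q(\sigma)$, which requires knowing that $\Aut(\C/\Q)$ preserves the family of extended maximal simple types; plausible, but it needs the Galois-equivariance of the Bushnell--Kutzko construction. Your fallback of citing the theorem that supercuspidals of $\GL_n(F)$ have trivial Schur index is essentially citing the statement to be proved (and its standard proof is the Whittaker-model argument the paper uses), so it does not rescue the type-theoretic sketch.
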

\begin{proof}
Square-integrable representations of $\GL_n(F)$ are generic, so $\pi$ has a unique Whittaker model. That is, $\pi$ appears with multiplicity one in $\Ind_N^G(\psi)$, where $\psi$ is a non-degenerate character of $N$. By mimicking the proof of \cite{gow}, we see that $\Ind_N^G(\psi)$ is defined over $\Q$, and hence by Lemma~\ref{mult-one-lemma} the representation $\pi$ is defined over $\Q(\pi)$.
\end{proof}

We can pin-down the relationship between $\mathcal D_{\JL(\pi)}$ and $\mathcal D_{\cL_F(\pi)}$ in places away from $p$. For each place $v$ of $\Q(\pi)$, let $\inv_v\colon\Br(\Q(\pi))\to\Q/\Z$ be the Hasse invariant map, which fits into the standard short exact sequence
\[
1\to\Br(\Q(\pi))\xrightarrow{\bigoplus_v\inv_v}\bigoplus_{v\nmid\infty}\Q/\Z\oplus\bigoplus_{v|\infty}\frac12\Z/\Z\xrightarrow{\Sigma}\Q/\Z\to0.
\]
\begin{thm}\label{index-calculation}
Let $\pi$ be an irreducible supercuspidal representation of $\GL_n(F)$ satisfying Hypothesis~\ref{finite-order-hypo}. Then, for a place $v\nmid p$ of $\Q(\pi)$,
\[
\inv_v(\mathcal D_{\JL(\pi)})+\inv_v(\mathcal D_{\cL_F(\pi)})=\begin{cases}
0&v\nmid p,\infty\\
\frac1{[\C:\R(\pi)]}&v|\infty.
\end{cases}
\]
Moreover,
\begin{equation}\label{sum-identity}
\sum_{v|p}\inv_v(\mathcal D_{\JL(\pi)})+\inv_v(\mathcal D_{\cL_F(\pi)})=\frac12[\Q(\pi):\Q]\in\Q/\Z.
\end{equation}
\end{thm}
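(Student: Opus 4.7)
The plan is to compute the sum $\inv_v(\mathcal D_{\JL(\pi)})+\inv_v(\mathcal D_{\cL_F(\pi)})$ via a single Brauer class, that of the external tensor product $\rho:=\JL(\pi)\boxtimes\cL_F(\pi)$ regarded as an irreducible representation of $D^\times\times W_F$. Setting $K:=\Q(\pi)$, one observes that both factors have field of rationality $K$, so inside $\End_\C(V_{\JL(\pi)}\otimes V_{\cL_F(\pi)})$ the elements $ka\otimes b$ and $a\otimes kb$ act identically for any $k\in K$. It follows that $\Q\{\rho\}\cong\Q\{\JL(\pi)\}\otimes_K\Q\{\cL_F(\pi)\}$ as central simple $K$-algebras, so in $\Br(K)$,
\[
[\mathcal D_\rho]=[\mathcal D_{\JL(\pi)}]+[\mathcal D_{\cL_F(\pi)}].
\]

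For a place $v\nmid p,\infty$ with residue characteristic $\ell$, the key input is the non-abelian Lubin--Tate realization of Harris--Taylor and Boyer: for any supercuspidal $\overline{\Q_\ell}$-representation $\pi'$ of $\GL_n(F)$, the $\pi'$-isotypic component of $H^{n-1}_c(\mathcal M_{LT,\infty},\overline{\Q_\ell})$ realizes $\JL(\pi')\boxtimes\cL_F(\pi')$ up to an unramified twist (which does not affect the Brauer class). Fix an isomorphism $\iota\colon\overline{\Q_\ell}\xrightarrow{\sim}\C$ whose restriction $\iota^{-1}|_K$ realizes the place $v$ of $K$, and set $\pi':=\iota^{-1}\pi$; the stabilizer of $\pi'$ in $\Gal(\overline{\Q_\ell}/\Q_\ell)$ is $\Gal(\overline{\Q_\ell}/K_v)$, so Galois descent applied to the intrinsic $\Q_\ell$-structure of the étale cohomology gives a $K_v$-rational model of the $\pi'$-isotypic, and hence of $\iota^{-1}\rho$. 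Pulling back along $\iota$ shows that $\rho$ is defined over $\iota(K_v)\subset\C$, which is $K$-algebra isomorphic to $K_v$. Therefore $\mathcal D_\rho\otimes_K K_v$ is split and $\inv_v([\mathcal D_\rho])=0$.

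At archimedean $v$, complex places contribute $0$ since $\Br(\C)=0$; at a real place, $K\hookrightarrow\R$ so $\pi$ is self-dual, and Prasad--Ramakrishnan's theorem that $\JL(\pi)$ and $\cL_F(\pi)$ have opposite Frobenius--Schur indicators gives $\inv_v(\mathcal D_{\JL(\pi)})+\inv_v(\mathcal D_{\cL_F(\pi)})=\tfrac12=\tfrac1{[\C:\R(\pi)]}$. Then \eqref{sum-identity} is forced by Hasse reciprocity $\sum_v\inv_v([\mathcal D_\rho])=0$: the archimedean contribution is $r/2$ where $r$ is the number of real places of $K$, which reduces modulo $\Z$ to $\tfrac12[K:\Q]$ in both cases (when $K$ is totally real, $r=[K:\Q]$; when $K$ is totally imaginary, $r=0$ but $[K:\Q]$ is even, so $\tfrac12[K:\Q]\equiv0$).

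The main difficulty lies in the finite-place step: while the non-abelian Lubin--Tate theorem is well-established, one must carefully track the half-integer twist $\nu^{(n-1)/2}$ in the definition of $\cL_F(\pi)$ and verify that the cohomology realizes $\JL(\pi)\boxtimes\cL_F(\pi)$ itself rather than a dual combination such as $\JL(\pi)^\vee\boxtimes\cL_F(\pi)$, which would only yield $[\mathcal D_{\JL(\pi)}]=[\mathcal D_{\cL_F(\pi)}]$ at $v\nmid p$---a considerably weaker conclusion. Once the cohomological input is fixed, the Galois-descent step and the remaining Brauer-group bookkeeping are formal.
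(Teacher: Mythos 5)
Your proposal is correct and follows essentially the same route as the paper: rationality of $\JL(\pi)\boxtimes\cL_F(\pi)$ at places $v\nmid p,\infty$ via the $\ell$-adic cohomology of the Lubin--Tate tower (the paper uses the multiplicity-one statement of Mieda together with the Whittaker-model rationality of $\pi$ to strip off the $\GL_n$-factor, which is the same descent you perform on the isotypic component), Prasad--Ramakrishnan at the archimedean places, and Hasse reciprocity to deduce \eqref{sum-identity}. The dual-versus-non-dual issue you flag is exactly the point the paper settles by citing the precise form $\pi^\vee\boxtimes\JL(\pi)\boxtimes\cL_F(\pi)$ of the cuspidal cohomology.
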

\begin{proof}
It suffices to prove this for a division algebra $D/F$ with $\inv(D)=1/n$, where the Jacquet-Langlands correspondence admits a geometric description. Then for each finite $\ell\ne p$, by \cite{mieda-jl} the \'{e}tale cohomology
\[
H^{n-1}_{\mathrm{LT}}:=\lim_{\substack{\longrightarrow\\ m}}H_c^{n-1}((M_m/\varpi^\Z)\otimes_{\breve{F}}\overline{\breve{F}},\Q_\ell)
\]
carries an action of $\GL_n(F)\times D^\times\times W_F$, and the cuspidal part $H^{n-1}_{\mathrm{LT},\cusp}\otimes_{\Q_\ell}\overline\Q_\ell$ contains $\pi^\vee\boxtimes \JL(\pi)\boxtimes\cL_F(\pi)$ with multiplicity one. Thus by Lemma~\ref{mult-one-lemma} the representation $\pi^\vee\boxtimes \JL(\pi)\boxtimes\cL_F(\pi)$ is defined over the field of rationality $\Q_\ell(\pi)$. Furthermore, since by Lemma~\ref{gl-n-rationality} the representation $\pi$ is defined over $\Q_\ell(\pi)$, and hence the representation $\JL(\pi)\boxtimes\cL_F(\pi)$ of $D^\times\times W_F$ is also defined over $\Q_\ell(\pi)$.

Now, since $\Q\{\JL(\pi)\boxtimes\cL_F(\pi)\}\cong\Q\{\JL(\pi)\}\otimes\Q\{\cL_F(\pi)\}$, we have:
\[
0=\inv_{\Q_\ell(\sqrt q)(\pi)}(\mathcal D_{\JL(\pi)\boxtimes\cL_F(\pi)})=\inv_{\Q_\ell(\pi)}(\mathcal D_{\JL(\pi)})+\inv_{\Q_\ell(\pi)}(\mathcal D_{\cL_F(\pi)}),
\]
i.e.,
\(
\inv_{\Q_\ell(\pi)}(\mathcal D_{\JL(\pi)})=-\inv_{\Q_\ell(\pi)}(\mathcal D_{\cL_F(\pi)}).
\)

Moreover, for places $v$ over infinity, \cite{Prasad-Ramakrishnan} tells us
\[
\inv_{\Q(\pi)_v}(\mathcal D_{\JL(\pi)})+\inv_{\Q(\pi)_v}(\mathcal D_{\cL_F(\pi)})=\frac1{[\C:\R(\pi)]}\in\frac1{[\C:\R(\pi)]}\Z/\Z.
\]
Indeed, when $\R(\pi)=\C$ the statement trivially holds. Moreover, $\R(\pi)=\R$ is equivalent to the existence of a real character $\chi\colon F^\times\to\R_{>0}$ such that $\chi^{-1}\otimes\pi$ is self-dual. Now the equation follows since $\varphi_{\chi^{-1}\pi}$ is symplectic if and only if $\inv_{\R}(\mathcal D_{\varphi_{\chi^{-1}\pi}})=\frac12$.

Now, using the fact that $\sum_{v}\inv_v=0$, we have
\begin{align*}
\sum_{v}\inv_v(\mathcal D_{\JL(\pi)})+\inv_v(\mathcal D_{\cL_F(\pi)})&=\sum_{v|p}\inv_v(\mathcal D_{\JL(\pi)})+\inv_v(\mathcal D_{\cL_F(\pi)})+\sum_{v|\infty}\frac1{[\C:\R(\pi)]}\\
&=\sum_{v|p}\inv_v(\mathcal D_{\JL(\pi)})+\inv_v(\mathcal D_{\cL_F(\pi)})+\frac12[\Q(\pi):\Q]\\
&=0,
\end{align*}
since $\sum_{v|\infty}[\Q(\pi)_v:\R]=\dim_\R(\Q(\pi)\otimes_\Q\R)=[\Q(\pi):\Q]$.
\end{proof}

Thus, the main problem now is to calculate the relation between $\inv_v(\mathcal D_{\JL(\pi)})$ and $\inv_v(\mathcal D_{\cL_F(\pi)|_{W_F}})$ for $v|p$. Recall \cite[Conjecture~8.1]{Prasad-Ramakrishnan}:
\begin{conjecture}\label{main-conjecture}
Let us be in the setting of Theorem~\ref{index-calculation}.
\begin{enumerate}
\item\label{orthogonal-conjecture} If $\JL(\pi)$ is not self-dual, or $\JL(\pi)$ is orthogonal, then for each $v|p$,
\[
\mathcal D_{\JL(\pi)}=\mathcal D_{\cL_F(\pi)}.
\]
\item\label{symplectic-conjecture} If $\JL(\pi)$ is symplectic, then:
\begin{itemize}
\item If $[\Q(\pi):\Q]$ is even, then for each $v<\infty$,
\[
\inv_v(\mathcal D_{\JL(\pi)})=\inv_v(\mathcal D_{\cL_F(\pi)}).
\]
\item If $[\Q(\pi):\Q]$ is odd, then for each $v\nmid p,\infty$,
\[\inv_v(\mathcal D_{\JL(\pi)})=\inv_v(\mathcal D_{\cL_F(\pi)}),\]
and for each $v|p$,
\[
\inv_v(\mathcal D_{\JL(\pi)})\in\inv_v(\mathcal D_{\cL_F(\pi)})+\frac12\Z.
\]
\end{itemize}
\end{enumerate}
\end{conjecture}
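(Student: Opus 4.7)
The plan is to extend the geometric argument of Theorem~\ref{index-calculation} to places $v \mid p$, combined with a case analysis by Frobenius-Schur type. Theorem~\ref{index-calculation} already gives $\inv_v(\mathcal{D}_{\JL(\pi)}) + \inv_v(\mathcal{D}_{\cL_F(\pi)}) = 0$ for $v \nmid p, \infty$, so at such places the asserted equality $\inv_v(\mathcal{D}_{\JL(\pi)}) = \inv_v(\mathcal{D}_{\cL_F(\pi)})$ is equivalent to both invariants being $2$-torsion. This $2$-torsion property, together with the individual values at $v \mid p$, is what remains.

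First I would address the non-self-dual case of part~\eqref{orthogonal-conjecture}. Because $\JL(\pi)$ factors through a finite quotient of $D^\times$ (Hypothesis~\ref{finite-order-hypo}), $\mathcal{D}_{\JL(\pi)}$ coincides with the Schur index of an irreducible representation of a finite group, for which classical theory (Brauer-Witt, local-global principles for Schur indices) gives strong constraints at non-archimedean places; combined with the analogous analysis for $\cL_F(\pi)$, this should force $\mathcal{D}_{\JL(\pi)} = \mathcal{D}_{\cL_F(\pi)}$ at $v \mid p$. For the orthogonal and symplectic-with-even-degree cases, the invariant pairing $\beta \colon \JL(\pi) \otimes \JL(\pi) \to \C$ is unique up to scalar and hence $\Aut(\C/\Q(\pi))$-equivariant, producing a Brauer obstruction that precisely computes $\mathcal{D}_{\JL(\pi)}$; matching this against the analogous pairing on $\cL_F(\pi)$ should establish the desired equality in both Brauer groups.

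The main obstacle is the symplectic case with $[\Q(\pi):\Q]$ odd, where the predicted $\frac12$ shift at $v \mid p$ is not forced by Theorem~\ref{index-calculation}, which only yields the sum $\frac12[\Q(\pi):\Q]$ over all such $v$. I would try to adapt the Lubin-Tate cohomology argument to $\ell = p$, replacing $\ell$-adic étale cohomology with its $p$-adic analogue and using $p$-adic Hodge theory (via results of Scholze, Fargues-Scholze, or Imai-Mieda) to realize $\JL(\pi) \boxtimes \cL_F(\pi)$ in a $\Q_p$-rational object with multiplicity one; Lemma~\ref{mult-one-lemma} would then give rationality over $\Q_p(\pi)$. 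Distributing the $\frac12$ shift correctly across individual places $v \mid p$ would require a careful tracking of the symplectic pairing through the $p$-adic Hodge-theoretic comparison. An alternative is a globalization strategy, realizing $\pi$ as the local component of a cuspidal automorphic representation of $\GL_n$ and exploiting the associated global Galois representation valued in $\mathrm{GSp}_n$ to relate Brauer classes at all places; but the existence of a globalization preserving the Frobenius-Schur sign at the relevant place is itself a nontrivial input.
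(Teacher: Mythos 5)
This statement is labelled a \emph{conjecture} in the paper (it is a restatement of Conjecture~8.1 of Prasad--Ramakrishnan), and the paper does not prove it: it only establishes partial cases, namely the identity of Theorem~\ref{index-calculation} away from $p$, the self-dual case when $\Q(\pi)$ has an odd number of places above $p$ (via Lemma~\ref{schur-subgroup-lemma} and the sum formula~\eqref{sum-identity}), and the case $(n,p-1)\le 2$ (via torsion bounds on the local invariants). So there is no ``paper's proof'' to compare against, and your proposal should be judged as a research plan rather than a proof. As such it has genuine gaps.

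The most concrete problem is your main strategy for the symplectic, odd-degree case. If a $p$-adic analogue of the Lubin--Tate argument realized $\JL(\pi)\boxtimes\cL_F(\pi)$ with multiplicity one in a $\Q_p$-rational object, then Lemma~\ref{mult-one-lemma} would give $\inv_v(\mathcal D_{\JL(\pi)})+\inv_v(\mathcal D_{\cL_F(\pi)})=0$ for every $v\mid p$, hence the sum over $v\mid p$ would vanish; but equation~\eqref{sum-identity} says this sum equals $\tfrac12[\Q(\pi):\Q]$, which is $\tfrac12\ne 0$ in $\Q/\Z$ when $[\Q(\pi):\Q]$ is odd. So the naive transposition to $\ell=p$ cannot work: either multiplicity one or $\Q_p$-rationality must fail, and the entire content of the conjecture is to locate exactly how the resulting $\tfrac12$ is distributed among the places over $p$ --- which your plan defers to ``careful tracking'' without identifying a mechanism. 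Similarly, in part~\eqref{orthogonal-conjecture} the assertion that Brauer--Witt theory and local-global principles ``should force'' $\mathcal D_{\JL(\pi)}=\mathcal D_{\cL_F(\pi)}$ at $v\mid p$ is not an argument; those tools (as in Lemma~\ref{schur-subgroup-lemma} and the final Proposition of the paper) constrain the order of the local invariants and their Galois orbit, but do not determine them at $v\mid p$, which is precisely where the problem is open. The one part of your plan that does go through is the observation that, for self-dual $\pi$, equality at $v\nmid p,\infty$ reduces to $2$-torsion of the invariants, which the paper obtains from the total reality of $\Q(\pi)$.
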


\begin{remark}
Although \cite{Prasad-Ramakrishnan} phrases many of their results in terms of $\inv_v(\mathcal D_{\JL(\pi)})-\inv_v(\mathcal D_{\cL_F(\pi)})$, we see that it is \emph{more natural} to ask about the sum $\inv_v(\mathcal D_{\JL(\pi)})+\inv_v(\mathcal D_{\cL_F(\pi)})$.
\end{remark}

However, in some cases, Theorem~\ref{index-calculation} already gives a complete picture. An easy example is:
\begin{cor}\label{easy-cor}
When $\Q(\pi)$ has a unique prime lying over $p$,
\[
\inv_v(\mathcal D_{\JL(\pi)})+\inv_v(\mathcal D_{\cL_F(\pi)})=\begin{cases}
0&v\nmid p,\infty\\
\frac12[\Q(\pi):\Q]&v| p\\
\frac1{[\C:\R(\pi)]}&v|\infty.
\end{cases}
\]
\end{cor}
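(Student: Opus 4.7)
The plan is to read off this corollary directly from Theorem~\ref{index-calculation} by specializing the global constraint \eqref{sum-identity} to the case of a single prime above $p$. First I would invoke the first part of Theorem~\ref{index-calculation} to dispose of the places $v\nmid p$: at finite $v\nmid p$ the sum $\inv_v(\mathcal D_{\JL(\pi)})+\inv_v(\mathcal D_{\cL_F(\pi)})$ vanishes, and at archimedean $v$ it equals $\frac{1}{[\C:\R(\pi)]}$. This matches the first and third cases of the corollary.

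Next I would address the unique place $v_0\mid p$. Equation \eqref{sum-identity} of Theorem~\ref{index-calculation} asserts
\[
\sum_{v\mid p}\bigl(\inv_v(\mathcal D_{\JL(\pi)})+\inv_v(\mathcal D_{\cL_F(\pi)})\bigr)=\tfrac{1}{2}[\Q(\pi):\Q]\in\Q/\Z,
\]
where the sum on the left is taken over all places of $\Q(\pi)$ above $p$. Under the hypothesis that there is exactly one such place $v_0$, the left-hand side collapses to a single term, yielding
\[
\inv_{v_0}(\mathcal D_{\JL(\pi)})+\inv_{v_0}(\mathcal D_{\cL_F(\pi)})=\tfrac{1}{2}[\Q(\pi):\Q].
\]
This is precisely the middle case.

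There is no hard step here: the corollary is a direct bookkeeping consequence of Theorem~\ref{index-calculation} once the hypothesis is used to force the sum in \eqref{sum-identity} to have a unique summand. The only thing to be careful about is reading the equation in $\Q/\Z$, but since $\frac{1}{2}[\Q(\pi):\Q]$ already lies in $\frac{1}{2}\Z/\Z$, no further reduction is needed.
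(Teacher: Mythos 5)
Your proposal is correct and is exactly the intended argument: the paper states this corollary as an immediate consequence of Theorem~\ref{index-calculation}, with the $v\nmid p$ cases read off from the first display and the unique $v\mid p$ case obtained by collapsing the sum in \eqref{sum-identity} to its single term. Nothing is missing.
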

We also have the following arithmetic result due to \cite{schur-subgroup-i} and \cite{schur-subgroup-ii}:
\begin{lemma}\label{schur-subgroup-lemma}
    Let $K/\Q$ be a finite abelian extension and let $[\mathcal D]\in\Br(K)$ be a central simple algebra arising as a simple factor of the group algebra $K[G]$ of some finite group $G$, and fix a rational prime $p>0$. Then the order of $m=\inv_\p([\mathcal D])$ is independent of the prime $\p|p$ of $K$. Moreover, $K$ contains a primitive $m$-th root of unity $\zeta_m$, and for $\sigma\in\Gal(K/\Q)$ with $\sigma(\zeta_m)=\zeta_m^b$,
    \[
    \inv_\p([\mathcal D])=b\inv_{\sigma(\p)}([\mathcal D])\in\Q/\Z.
    \]
\end{lemma}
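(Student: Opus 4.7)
The lemma is the Benard--Schacher theorem on the Schur subgroup $S(K) \subseteq \Br(K)$; I would prove it by the classical Brauer--Witt reduction to cyclotomic algebras followed by an explicit local computation. First, I invoke the \emph{Brauer--Witt theorem}: every simple factor of $K[G]$ is Brauer-equivalent to a \emph{cyclotomic algebra}
\[
A = (L/K, \alpha), \qquad L = K(\zeta_n),
\]
where the $2$-cocycle $\alpha$ takes values in $\mu_n$. Since Brauer-equivalent algebras share all local invariants, this reduces both conclusions of the lemma to the case of such $A$.

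Next, I would match two natural actions of $\sigma \in \Gal(K/\Q)$. On one hand, naturality of the invariant map gives $\inv_\p(\sigma_*[A]) = \inv_{\sigma^{-1}(\p)}([A])$. On the other hand, $\sigma$ acts on the cocycle values in $\mu_m \subseteq L^\times$ by the $b$-th power, so in the cyclotomic presentation one computes $\sigma_*[A] = b \cdot [A]$ in $\Br(K)$. Equating the two formulas for $\inv_\p(\sigma_*[A])$ and then replacing $\sigma$ by $\sigma^{-1}$ yields the relation in the statement; transitivity of $\Gal(K/\Q)$ on primes above $p$ then forces the order $m$ to be independent of $\p$.

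The assertion that $K$ already contains $\zeta_m$ is where the abelianness of $K/\Q$ enters essentially. From the cyclotomic presentation of $A$, the denominator $m_\p$ of $\inv_\p(A)$ forces $\mu_{m_\p} \subseteq K_\p$; in an abelian extension of $\Q$, any root of unity present in some completion already lies globally in $K$ (via Kronecker--Weber and explicit control of the decomposition groups inside $\Q(\zeta_N)$).

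The main obstacle is the matching in the second step --- that the cocycle-theoretic action of $\sigma$ on cyclotomic data corresponds to multiplication by $b$ on local invariants --- which requires careful bookkeeping of conventions for the Hasse invariant of a cyclic crossed product and is where the real content of Benard--Schacher lies. The Brauer--Witt reduction and the descent of roots of unity are by now standard machinery.
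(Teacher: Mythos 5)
The paper offers no proof of this lemma: it is imported verbatim from the two cited references (this is the Benard--Schacher theorem on the Schur subgroup), so there is no internal argument to compare against. You have identified the statement correctly, and your first two steps --- the Brauer--Witt reduction to a cyclotomic algebra $A=(K(\zeta_n)/K,\alpha)$ with $\alpha$ valued in $\mu_n$, followed by equating the two descriptions of $\sigma_*$ on $\Br(K)$ (permutation of local invariants on one hand, multiplication by the cyclotomic exponent on the cocycle on the other) --- are exactly the standard proof of that theorem. Two pieces of bookkeeping you should make explicit: the extension of $\sigma$ to $K(\zeta_n)$ acts on $\mu_n$ by some exponent $t$ with only $t\equiv b\pmod m$, so the identity $t\inv_{\sigma(\p)}=b\inv_{\sigma(\p)}$ uses that $\inv_{\sigma(\p)}([A])$ is $m$-torsion; and one needs $K(\zeta_n)/\Q$ abelian (which does follow from $K/\Q$ abelian) so that conjugation does not permute the indices of the crossed product.

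The genuine gap is in your argument that $\zeta_m\in K$. The assertion that in an abelian extension of $\Q$ any root of unity present in some completion already lies in $K$ is false: $\Q_5$ contains $\zeta_4$ while $\Q$ does not, and the content of this part of Benard--Schacher is precisely to exclude, say, a class in the Schur subgroup of $\Q$ with $\inv_5=\frac14$ --- which your local criterion $\mu_4\subseteq\Q_5$ would happily permit. The correct argument is global and is the same computation as your second step, applied instead to $\tau\in\Gal(K(\zeta_n)/K)$: conjugation by an element of the group acts trivially on $H^2(\Gal(K(\zeta_n)/K),K(\zeta_n)^\times)$, and since $\Gal(K(\zeta_n)/\Q)$ is abelian this conjugation action on the cyclotomic cocycle is just $\alpha\mapsto\alpha^s$ where $\tau(\zeta_n)=\zeta_n^s$; hence $s[A]=[A]$ in $\Br(K)$, so $s\equiv1$ modulo the order $e$ of $[A]$. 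Thus every element of $\Gal(K(\zeta_n)/K)$ fixes $\zeta_e$, so $\zeta_e\in K$, and $m\mid e$ gives $\zeta_m\in K$. With that replacement your outline becomes a complete sketch of the cited theorem.
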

In particular, we can confirm the second half of Conjecture~\ref{main-conjecture}~\eqref{symplectic-conjecture}:
\begin{cor}
Let $\pi$ be an irreducible self-dual supercuspidal representation of $\GL_n(F)$, satisfying Hypothesis~\ref{finite-order-hypo}. Then for any place $v$ of $\Q(\pi)$,
\[
\inv_v(\mathcal D_{\JL(\pi)}),\inv_v(\mathcal D_{\cL_F(\pi)})\in\frac12\Z/\Z,
\]
and the values (respectively) only depend on the characteristic of $v$. In particular, when $\Q(\pi)$ has an odd number of places above $p$ (in particular when $[\Q(\pi):\Q]$ is odd), for any $v|p$,
\begin{equation}\label{odd-equation}
\inv_v(\mathcal D_{\JL(\pi)})+\inv_v(\mathcal D_{\cL_F(\pi)})=\frac12[\Q(\pi):\Q].
\end{equation}
\end{cor}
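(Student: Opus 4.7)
To establish the first claim, that every invariant lies in $\frac12\Z/\Z$, my plan is to show the abstract identity $\mathcal D_\rho\cong\mathcal D_\rho^{op}$ for any self-dual $\rho$. Self-duality of $\pi$ descends, by functoriality of LLC and JL, to self-duality of $\JL(\pi)$ and $\cL_F(\pi)$. For any representation $\rho$ the transpose induces an anti-isomorphism $\End_\C(V)\cong\End_\C(V^\vee)^{op}$ carrying $\Q\{\rho\}$ to $\Q\{\rho^\vee\}^{op}$, so $\rho\cong\rho^\vee$ forces $\Q\{\rho\}\cong\Q\{\rho\}^{op}$, whence $[\mathcal D_\rho]=-[\mathcal D_\rho]\in\Br(\Q(\pi))$ and $2[\mathcal D_\rho]=0$.

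Next I invoke Lemma~\ref{schur-subgroup-lemma} to show the values depend only on the residue characteristic of $v$. Under Hypothesis~\ref{finite-order-hypo}, both $\JL(\pi)$ and $\cL_F(\pi)$ factor through finite groups (using the earlier lemma---that a power of Frobenius acts as a scalar---to upgrade finite central order to finite image of $\varphi_\pi$), so the associated division algebras lie in the Schur subgroup of $\Br(\Q(\pi))$. Moreover $\Q(\pi)/\Q$ is finite abelian by Remark~\ref{galois-remark}. Since the exponent of each invariant is at most $2$ by the first step, the order $m$ appearing in Lemma~\ref{schur-subgroup-lemma} is $1$ or $2$, so $\zeta_m\in\{\pm 1\}$ is Galois-fixed; transitivity of Galois on primes above each rational prime then forces the invariants above that prime to agree. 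Self-duality of a finite-image (hence unitary) representation forces its character to be real, so $\Q(\pi)\subset\R$ and all archimedean places are real.

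Finally the displayed equation is pure arithmetic. Let $a:=\inv_v(\mathcal D_{\JL(\pi)})+\inv_v(\mathcal D_{\cL_F(\pi)})\in\frac12\Z/\Z$, which is independent of the choice of $v\mid p$ by the previous step, and let $n_p$ be the number of primes of $\Q(\pi)$ above $p$. The sum identity \eqref{sum-identity} becomes $n_p\cdot a\equiv\tfrac12[\Q(\pi):\Q]\pmod\Z$. If $n_p$ is odd then $(n_p-1)a\in\Z$ (since $2a\in\Z$), hence $a\equiv n_p a\equiv\tfrac12[\Q(\pi):\Q]\pmod\Z$, which is \eqref{odd-equation}; the parenthetical case is covered because $n_p\mid[\Q(\pi):\Q]$ in an abelian extension. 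The main obstacle I anticipate is executing the first step cleanly: extracting $\mathcal D_\rho\cong\mathcal D_\rho^{op}$ from the transpose anti-homomorphism requires care, while everything else reduces to bookkeeping with Theorem~\ref{index-calculation} and Lemma~\ref{schur-subgroup-lemma}.
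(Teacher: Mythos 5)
Your proposal is correct and, for two of its three steps (the Galois-equivariance of the invariants above $p$ via Lemma~\ref{schur-subgroup-lemma}, and the final arithmetic with the sum identity \eqref{sum-identity}), coincides with the paper's argument. The one genuine divergence is how you obtain $2$-torsion. The paper gets it from Lemma~\ref{schur-subgroup-lemma} itself: self-duality forces $\Q(\pi)$ to be totally real, hence the only roots of unity in $\Q(\pi)$ are $\pm1$, so the order $m$ of each local invariant divides $2$; the constancy over $p$ then falls out of the same lemma since $b=1$. You instead derive $2[\mathcal D_\rho]=0$ directly from self-duality via the transpose anti-isomorphism $\Q\{\rho\}\cong\Q\{\rho^\vee\}^{op}$, and only afterwards feed $m\le 2$ back into Lemma~\ref{schur-subgroup-lemma} to get $b=1$. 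Your route is sound (the anti-isomorphism fixes the center pointwise, so it is an isomorphism of central simple $\Q(\pi)$-algebras and gives $[\mathcal D_\rho]=-[\mathcal D_\rho]$), and it has the merit of being the global Frobenius--Schur-type statement rather than a place-by-place consequence of the Benard--Yamada theory; it also makes the $2$-torsion at archimedean and at $\ell\ne p$ places transparent. What the paper's version buys is economy: a single appeal to the lemma yields both the torsion bound and the constancy over $p$.

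Two small points to tighten. First, when you assert that self-duality of $\pi$ descends to $\cL_F(\pi)$: because of the half-twist, $\cL_F(\pi)^\vee\cong\cL_F(\pi)\otimes|\cdot|^{\,n-1}$ rather than $\cL_F(\pi)$ on the nose; for $n$ odd the twist is by a $\Q$-valued unramified character so $\Q\{\cL_F(\pi)\}=\Q\{\varphi_\pi\}$ and your argument applies verbatim to $\varphi_\pi$, while for $n$ even one must track the $\sqrt q$ (the paper glosses over the same issue when it applies Lemma~\ref{schur-subgroup-lemma} to $\mathcal D_{\cL_F(\pi)}$, which requires finite image). Second, your step showing the values ``depend only on the characteristic'' covers the finite places; at the infinite places you only show all are real, though agreement there follows since the real invariant is the (global) Frobenius--Schur indicator --- again, the paper is no more explicit.
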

\begin{proof}
Since $\pi$ is self-dual, $\Q(\pi)$ is totally real, so it only contains the root of unity $\zeta_2$. Thus the invariants must be $2$-torsion and dependent only on the characteristic of $v$ by Lemma~\ref{schur-subgroup-lemma}. 

In particular, if $\Q(\pi)$ has an odd number of places over $p$, equation~\eqref{sum-identity} is enough to pin down the invariants, and implies \eqref{odd-equation}.
\end{proof}

We also have uniform bounds on the local indices of the division algebras $\mathcal D_{\JL(\pi)}$ and $\mathcal D_{\cL_F(\pi)}$:
\begin{prop}
Let $\pi$ be a supercuspidal representation of $\GL_n(F)$ satisfying Hypothesis~\ref{finite-order-hypo}. Then for each prime $v|p$ of $\Q(\pi)$,
\[
\inv_v(\mathcal D_{\JL(\pi)}),\inv_v(\mathcal D_{\cL_F(\pi)})\in\frac1{(n,p-1)}\Z/\Z.
\]
In particular, if $(p-1,n)\le2$, Conjecture~\ref{main-conjecture} holds.
\end{prop}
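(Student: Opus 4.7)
The plan is to combine Lemma~\ref{schur-subgroup-lemma} with a dimension bound to control the orders $m_\JL := \mathrm{ord}(\inv_v(\mathcal D_{\JL(\pi)}))$ and $m_\cL := \mathrm{ord}(\inv_v(\mathcal D_{\cL_F(\pi)}))$. First I would observe that, by Hypothesis~\ref{finite-order-hypo} and Remark~\ref{galois-remark}, both $\JL(\pi)$ and $\cL_F(\pi)$ factor through finite quotients and $\Q(\pi)\subseteq\Q(\zeta_N)$ is abelian over $\Q$, so Lemma~\ref{schur-subgroup-lemma} applies to each. Write $m$ for either $m_\JL$ or $m_\cL$; then $\zeta_m\in\Q(\pi)$.

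The critical step is to show $p\nmid m$. I would argue by contradiction: if $p\mid m$, so $\zeta_p\in\Q(\pi)$, then since $\Q(\zeta_p)/\Q$ is totally ramified at $p$, the inertia subgroup $I_v\subseteq\Gal(\Q(\pi)/\Q)$ surjects onto $\Gal(\Q(\zeta_p)/\Q)\cong(\Z/p)^\times$, allowing me to pick $\sigma\in I_v$ with $\sigma(\zeta_p)\ne\zeta_p$. Since $\sigma$ fixes $v$, Lemma~\ref{schur-subgroup-lemma} gives $(1-b)\inv_v=0$, i.e.\ $b\equiv 1\pmod m$, where $\sigma(\zeta_m)=\zeta_m^b$. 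Reducing mod $p$ forces $\sigma(\zeta_p)=\zeta_p$, a contradiction.

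Given $p\nmid m$, two further bounds finish the estimate. Applying Lemma~\ref{schur-subgroup-lemma} to a Frobenius lift $\sigma\in D_v$ gives $\sigma(\zeta_m)=\zeta_m^p$, hence $\inv_v=p\cdot\inv_v$, so $m\mid p-1$. Separately, the Schur index divides the representation's dimension; since $\cL_F(\pi)$ is $n$-dimensional, $m_\cL\mid n$, and since the simple-type construction of $\JL(\pi)$ (Bushnell--Kutzko and its inner-form analogue) yields $\dim\JL(\pi)=n\cdot p^k$ for some $k\ge 0$, combining with $p\nmid m_\JL$ gives $m_\JL\mid n$. Together, $m\mid(n,p-1)$. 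The hard part will be verifying this dimension formula for $\dim\JL(\pi)$ uniformly across the Bushnell--Kutzko and S\'echerre--Stevens classification, which I would extract from the explicit formal-degree formulas for supercuspidals.

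For the final assertion, when $(p-1,n)\le 2$ the invariants at $v\mid p$ lie in $\tfrac12\Z/\Z$, so $\inv_v=-\inv_v$. Combined with $\inv_v(\mathcal D_{\JL(\pi)})+\inv_v(\mathcal D_{\cL_F(\pi)})=0$ at $v\nmid p,\infty$ from Theorem~\ref{index-calculation}, this gives equality $\inv_v(\mathcal D_{\JL(\pi)})=\inv_v(\mathcal D_{\cL_F(\pi)})$ at such $v$, while the sum identity~\eqref{sum-identity} together with the archimedean Frobenius--Schur analysis of \cite{Prasad-Ramakrishnan} pins down the $v\mid p$ invariants case by case, confirming each assertion of Conjecture~\ref{main-conjecture}.
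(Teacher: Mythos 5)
Your strategy for the Galois-theoretic half is sound and is in fact a self-contained substitute for the paper's citation of Yamada: the inertia argument showing $p\nmid m$ and the Frobenius argument showing $m\mid p-1$ are exactly how the $(p-1)$-torsion bound is proved, and they work for odd $p$. (Note, though, that the $p\nmid m$ step collapses at $p=2$, since $\zeta_2\in\Q$ yields no contradiction; this is precisely why quaternionic Schur algebras like $\Hh_\Q$ can ramify at $2$, so the $p=2$ case needs separate treatment.) The $\cL_F(\pi)$ bound via "index divides dimension $=n$" matches the paper. The genuine gap is the step you yourself flag as the hard part: the claimed formula $\dim\JL(\pi)=n\cdot p^k$ is false. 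Already for $n=2$ a ramified supercuspidal has $\dim\JL(\pi)=(q+1)p^k$, and in general (say in the essentially tame case attached to $E/F$ of degree $n$ with $e=e(E/F)$, $f=f(E/F)$) one gets $\dim\JL(\pi)=\tfrac{n}{e}\cdot\tfrac{q^n-1}{q^f-1}\cdot p^k$. So "$p\nmid m_{\JL}$ and $m_{\JL}\mid\dim\JL(\pi)$" does not give $m_{\JL}\mid n$ as literally written. The repair is the congruence $q\equiv 1\pmod{p-1}$, which forces the prime-to-$p$ part of $\dim\JL(\pi)$ to be $\equiv n\pmod{p-1}$, whence $m_{\JL}\mid\gcd(\dim\JL(\pi),p-1)\mid (n,p-1)$; this congruence is the missing ingredient, and it is also the substance of the paper's alternative route, which avoids dimension formulas entirely by filtering $D^\times/F^\times$ with a pro-$p$ piece and a quotient $\F_{q^n}^\times/\F_q^\times$ of order $1+q+\cdots+q^{n-1}\equiv n\pmod{p-1}$.

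Two smaller points. First, applying Lemma~\ref{schur-subgroup-lemma} to $\cL_F(\pi)$ requires it to be a constituent of a group algebra of a finite group; the half-integral Tate twist in $\cL_F(\pi)=\varphi_\pi(-\tfrac{n-1}{2})$ makes this nontrivial for $n$ even (the paper silently works over $\Q_\ell(\sqrt q)(\pi)$ at the analogous point), so you should twist by an unramified character before invoking the lemma. Second, your closing deduction of Conjecture~\ref{main-conjecture} is too quick: knowing $\inv_v\in\tfrac12\Z/\Z$ only at $v\mid p$ does not let you convert $\inv_v(\mathcal D_{\JL(\pi)})=-\inv_v(\mathcal D_{\cL_F(\pi)})$ into an equality at $v\nmid p,\infty$ (that conversion needs self-duality of $\pi$, via the corollary to Lemma~\ref{schur-subgroup-lemma}), and the sum identity~\eqref{sum-identity} constrains only the total over $v\mid p$, not each local term; pinning down the individual $v\mid p$ invariants requires the Benard--Schacher transitivity statement of Lemma~\ref{schur-subgroup-lemma} again.
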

\begin{proof}
By \cite{yamada}, both of $\inv_v(\mathcal D_{\JL(\pi)})$ and $\inv_v(\mathcal D_{\cL_F(\pi)})$ are $(p-1)$-torsion. Moreover, since $\inv_v(\mathcal D_{\cL_F(\pi)})$ is $n$-torsion, since $\dim(\cL_F(\pi))=n$. Thus $\inv_v(\mathcal D_{\cL_F(\pi)})$ is also torsion under the greatest common divisor $(n,p-1)$.

Moreover, $D^\times/F^\times$ is an extension of a pro-$p$ group by $\F_{q^n}^\times/\F_q$, which has order $1+q+\cdots+q^{n-1}$. Thus $\inv_v(\mathcal D_{\JL(\pi)})$ is also $p^N(1+q+\cdots+q^{n-1})$-torsion. Again $\inv_v(\mathcal D_{\JL(\pi)})$ is $(n,p-1)$-torsion.
\end{proof}

\bibliographystyle{amsalpha}
\bibliography{bibfile}

\end{document}